\numberwithin{equation}{section}
\newtheorem{theorem}{Theorem}[section]
\newtheorem{lemma}[theorem]{Lemma}
\newtheorem{proposition}[theorem]{Proposition}
\newtheorem{remark}[theorem]{Remark}
\newenvironment{proof}[1][Proof]{\noindent\textbf{#1.} }{\ \rule{0.5em}{0.5em}}
\begin{document}

\begin{center}
\Large \textbf{A generalized version of Holmstedt’s formula for the $K$-functional   \\}
\vskip0.5cm
\large{\textbf{Irshaad Ahmed}${}^{1}$,   \textbf{Alberto  Fiorenza${}^{2}$}} and  \textbf{Amiran Gogatishvili${}^{3}$}\\
\vskip0.5cm
\small{${}^{1}$Department of Mathematics, Sukkur IBA University, Sukkur, Pakistan.\\irshaad.ahmed@iba-suk.edu.pk

\small${}^{2}$Universit\`a di Napoli Federico II, Dipartimento di
Architettura, via Monteoliveto, 3, 80134 - Napoli,   Italy and
Consiglio Nazionale delle Ricerche, Istituto per le Applicazioni del
Calcolo ``Mauro Picone", Sezione di Napoli, via Pietro Castellino,
111, 80131 - Napoli, Italy.\\fiorenza@unina.it

\small${}^{3}$Institute of  Mathematics of the   Czech Academy of Sciences - \v Zitn\'a, 115 67 Prague 1,  Czech Republic.\\gogatish@math.cas.cz }
\end{center}

\begin{abstract}  Let $(A_0, A_1)$ be a compatible couple of quasi-normed spaces, and let $\Phi_0$ and $\Phi_1$ be two general parameters of $K$-interpolation method. We compute  $K$-functional for the couple $((A_0,A_1)_{\Phi_0}, (A_0, A_1)_{\Phi_1})$ in terms of $K$-functional for the couple $(A_0, A_1)$.
\end{abstract}

\textbf{Key words}: $K$-functional, $K$-interpolation spaces, Holmstedt's formula\\

\textbf{MSC 2020:} 46B70

\section{Introduction}

  Let $(A_0,A_1)$  be a compatible couple of quasi-normed spaces.   For each $ f\in A_0+A_1$ and $t>0$, the Peetre's  $K$-functional is defined by
\begin{eqnarray*}
K(t,f)&=&K(t,f;A_0,A_1)\\
&=&\inf\{\|f_0\|_{A_0}+t\|f_1\|_{A_1}:\;f_0 \in A_0, \; f_1 \in A_1,\; f=f_0+f_1\}.
\end{eqnarray*}
  Let $\Phi$ be a quasi-normed space of Lebesgue measurable functions defined on $(0,\infty)$ with monotone quasi-norm, that is, $|g|\leq |h|$ implies  $\|g\|_{\Phi}\leq \|h\|_{\Phi}$.  Assume that $t\mapsto \min(1,t) \in \Phi.$ The general $K$-interpolation ${\bar{A}}_{\Phi}=(A_0, A_1)_{\Phi}$ is formed of those $f\in A_0+A_1$ for which the quasi-norm
   $$
\|f\|_{\bar{A}_{\Phi}}=\|K(t,f)\|_{\Phi}
$$
is finite; see \cite{BK}. We call $\Phi$  as  general parameter of $K$-interpolation method. \\

Let $(\theta, q)\in([0,1]\times[1,\infty])\setminus(\{0,1\}\times[1,\infty]).$ In particular, let $\Phi$ be given by
$$\|g\|_{\Phi}=\left(\int_0^\infty[t^{-\theta}|g(t)|]^q\frac{dt}{t}\right)^{1/q},$$
with the usual modification when $q=\infty.$ Then we put ${\bar{A}}_{\Phi}={\bar{A}}_{\theta, q}$, and ${\bar{A}}_{\theta, q}$  is the classical scale of $K$-interpolation spaces (see \cite{BL, T, BS}). In 1970, Tord Holmstedt computed, up to equivalence of constants,   $K$-functional for the couple $({\bar{A}}_{\theta_0,q_0}, {\bar{A}}_{\theta_1,q_1})$ in terms of $K$-functional for the couple $(A_0, A_1)$. The precise formula is as follows.   Let $0<q_0,q_1\leq \infty$ and $0<\theta_0< \theta_1<1$. Then  for all  $f\in A_0+A_1$ and for all $t>0$,  we have
\begin{eqnarray*}
K(t^{\theta_1-\theta_0},f;{\bar{A}}_{\theta_0,q_0}, {\bar{A}}_{\theta_1,q_1})&\approx&\|u^{-\theta_0-1/{q_0}}K(u,f)\|_{q_0, (0,t)}\\
&&+t^{\theta_1-\theta_0}\|u^{-\theta_1-1/{q_1}}K(u,f)\|_{q_1, (t,\infty)},
\end{eqnarray*}
(see \cite[Theorem 2.1]{Ho}).
 Since then several authors have obtained generalizations or variants of Holmstedt's formula (see, for instance, \cite{AEEK, AE, AFG, AKA, D1, D2, D3, DFS1, DFS2, EOP, FS1, FS3, FS4, FS5, FS6, GOT, He, Pe }).\\

  Let $\Phi_0$ and $\Phi_1$ be two general parameters of $K$-interpolation method.   In this paper,  we compute the  $K$-functional for the couple $(\bar{A}_{\Phi_0}, \bar{A}_{\Phi_1})$ in terms of $K$-functional for the couple $(A_0, A_1)$ under certain appropriate  separation conditions imposed on $\Phi_0$ and $\Phi_1$. Our computation provides a general frame work to derive Holmstedt-type estimates. In particular,  all  those  Holmstedt type formulae (except the one  in \cite[Theorem 3.5]{AFG}) contained in the papers  \cite{ AE, AFG, AKA, D1, D2, D3, DFS2, EOP, FS1, FS3, FS4, FS5, FS6, GOT, He, Pe}, which express the $K$-functional of the couple $(\bar{A}_{\Phi_0}, \bar{A}_{\Phi_1})$,  for special cases of $\Phi_0$ and $\Phi_1$,  in terms of that of $(A_0, A_1)$,   can be derived immediately  by our general computation.

\section{A notation and slowly varying functions}
\subsection{Notation}

  Let  $f$ and $g$ be two positive functions defined on $(0,\infty)$.  We write $$f(t) \lesssim g(t),\;\;t>0, $$
if there exists a constant  $c>0$ such that $$f(t) \leq c g(t),\;\; t>0.$$
We simply write $$f(t) \approx g(t), \;\;t>0, $$ if
$$f(t) \lesssim  g(t),\;\;  t>0,\; \text{and}\; g(t) \lesssim f(t), \;\;t>0. $$
\subsection{Slowly varying functions}
Let $b:(0,\infty)\rightarrow (0,\infty)$  be a Lebesgue measurable function. Following \cite{GOT}, we say $b$ is slowly varying on $(0,\infty)$ if for every  $\varepsilon>0,$  there  are  positive  functions $g_{\varepsilon}$ and $g_{-\varepsilon}$ on $(0,\infty)$ such that $g_{\varepsilon}$ is non-decreasing and  $g_{-\varepsilon}$ is non-increasing, and we have
$$
t^{\varepsilon}b(t)\approx g_{\varepsilon}(t)\; \; \text{and}\;\;  t^{-\varepsilon}b(t)\approx g_{-\varepsilon}(t)\;\; \text{for all }\; t\in (0,\infty).
$$

We denote the class of all slowly varying functions by $SV.$ Let $\mathbb{A}=(\alpha_0,\alpha_\infty)\in \mathbb{R}^2.$ Define
$$\ell^\mathbb{A}(t)=\left\{
\begin{array}
[c]{cc}%
(1-\ln t)^{\alpha_0}, & 0<t\leq 1,\\
& \\
(1+\ln t)^{\alpha_\infty}, &
t>1,
\end{array}
\right.
$$
 Then $\ell^\mathbb{A}\in SV.$ In addition, the class $SV$ contains compositions of appropriate log-functions, $\exp |\log t|^\alpha$ with $\alpha\in (0,1)$, etc.

We collect in next Proposition some elementary properties of slowly varying functions, which we will need in Section 4.  The proofs of these assertions can be carried out as in \cite[Lemma $2.1$]{GOT} or \cite[Proposition $3.4.33$]{EE}.
\begin{proposition}\label{svp1} Given  $b$, $b_1$, $ b_2\in SV$, the following assertions hold:

\begin{enumerate}[(i)]
    \item $b_1 b_2 \in SV$ and  $b^{r} \in SV$ for each $ r\in \mathbb{R}.$

    \item  If $\alpha>0$, then

$$\int_0^t s^{\alpha}b(s)\frac{ds}{s}\approx t^{\alpha}b(t),\quad t>0.$$

\item If $\alpha>0$, then
$$\int_t^\infty s^{-\alpha}b(s)\frac{ds}{s}\approx t^{-\alpha}b(t),\quad t>0.$$

\item Assume that
$$\int_0^1 b(t)\frac{dt}{t}<\infty,$$
and set
$${B}(t)=\int_0^t b(s)\frac{ds}{s},\quad t>0.$$
Then ${B}\in SV,$ and $b(t)\lesssim {B}(t),\; t>0.$

\item Assume that
$$\int_1^\infty b(t)\frac{dt}{t}<\infty,$$
and set
$$\tilde{B}(t)=\int_t^\infty b(s)\frac{ds}{s},\quad t>0.$$
Then $\tilde{B}\in SV,$ and $b(t)\lesssim \tilde{B}(t),\; t>0.$
\end{enumerate}

\end{proposition}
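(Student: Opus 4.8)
The plan is to prove the five assertions essentially as in \cite[Lemma $2.1$]{GOT}, exploiting the defining two-sided estimates of slowly varying functions together with monotonicity.

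For (i), given $b_1,b_2\in SV$ and $\varepsilon>0$, I would split $\varepsilon$ as $\varepsilon/2+\varepsilon/2$, write $t^\varepsilon b_1(t)b_2(t)=(t^{\varepsilon/2}b_1(t))(t^{\varepsilon/2}b_2(t))$ and observe that a product of two non-decreasing positive functions is non-decreasing; the estimate $t^\varepsilon b_1b_2\approx g_\varepsilon$ then follows from the corresponding estimates for $b_1$ and $b_2$ and the fact that $\approx$ is compatible with products. The non-increasing side is symmetric. For $b^r$ with $r>0$ one uses $t^\varepsilon b^r(t)=(t^{\varepsilon/r}b(t))^r$ and that the $r$-th power of a non-decreasing positive function is non-decreasing; the case $r<0$ follows from $b^r=(b^{-1})^{|r|}$ after noting $b^{-1}\in SV$ (which is the case $r=-1$, handled by simply swapping the roles of $g_\varepsilon$ and $1/g_{-\varepsilon}$), and $r=0$ is trivial.

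For (ii), fix $\alpha>0$ and $0<\varepsilon<\alpha$. Using $s^\varepsilon b(s)\approx g_\varepsilon(s)$ with $g_\varepsilon$ non-decreasing, estimate
\begin{equation*}
\int_0^t s^\alpha b(s)\frac{ds}{s}=\int_0^t s^{\alpha-\varepsilon}\,s^\varepsilon b(s)\frac{ds}{s}\lesssim g_\varepsilon(t)\int_0^t s^{\alpha-\varepsilon}\frac{ds}{s}\approx t^{\alpha-\varepsilon}g_\varepsilon(t)\approx t^\alpha b(t),
\end{equation*}
where the convergence of $\int_0^t s^{\alpha-\varepsilon-1}\,ds$ at $0$ uses $\alpha>\varepsilon$; the reverse inequality $t^\alpha b(t)\lesssim\int_0^t s^\alpha b(s)\,ds/s$ is obtained either by restricting the integral to $(t/2,t)$ and using $s^{-\varepsilon}b(s)\approx g_{-\varepsilon}(s)$ non-increasing, or simply by monotonicity of $g_\varepsilon$ on a suitable subinterval. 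Assertion (iii) is the mirror image: split off $s^{-\alpha+\varepsilon}$, use $s^{-\varepsilon}b(s)\approx g_{-\varepsilon}(s)$ non-increasing, and note $\int_t^\infty s^{-\alpha+\varepsilon-1}\,ds$ converges at $\infty$ because $\alpha>\varepsilon$. For (iv), the membership ${B}\in SV$ follows by applying (ii): for $\varepsilon>0$ one checks $t^\varepsilon{B}(t)\approx g_\varepsilon(t)$ is non-decreasing directly (it is an integral of a positive function times $t^\varepsilon$, hence monotone after the standard manipulation), while $t^{-\varepsilon}{B}(t)$ is shown $\approx$ a non-increasing function by writing ${B}(t)=\int_0^t b(s)\,ds/s$ and using $s^\varepsilon b(s)\approx g_\varepsilon(s)$ to get ${B}(t)\lesssim t^\varepsilon g_\varepsilon(t)$... more cleanly, one argues that ${B}\in SV$ because ${B}$ is non-decreasing (so the non-increasing side with any $\varepsilon>0$ reduces to showing $t^{-\varepsilon}{B}(t)$ is equivalent to something non-increasing, which follows from ${B}(t)\lesssim t^\varepsilon b(t)\cdot\varepsilon^{-1}$-type bounds via (ii) applied with exponent $\varepsilon$), and the pointwise bound $b(t)\lesssim{B}(t)$ follows from ${B}(t)\geq\int_{t/2}^t b(s)\,ds/s\gtrsim b(t)\int_{t/2}^t ds/s\gtrsim b(t)$, again using $s^{\pm\varepsilon}b(s)$ monotone on $(t/2,t)$. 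Assertion (v) is the reflection of (iv) at $\infty$.

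The main obstacle is the verification in (iv) and (v) that the \emph{primitive} ${B}$ (respectively $\tilde B$) again lies in $SV$: one must produce, for every $\varepsilon>0$, genuinely monotone comparison functions for $t^{\pm\varepsilon}{B}(t)$. The natural route is to prove first that ${B}(t)\approx t^\varepsilon\int_0^t s^{-\varepsilon}b(s)\,ds/s$-type identities hold — more precisely, for $0<\varepsilon$ one shows $t^{-\varepsilon}{B}(t)\lesssim$ non-increasing using $b(s)\lesssim s^{-\varepsilon}g_\varepsilon(s)$... actually the slick argument is: $t^\varepsilon{B}(t)=\int_0^t(t/s)^\varepsilon s^\varepsilon b(s)\,ds/s$, and since $(t/s)^\varepsilon\geq 1$ on the domain and both factors are monotone in the right direction, $t^\varepsilon{B}(t)$ is itself non-decreasing up to equivalence; for the other direction write $t^{-\varepsilon}{B}(t)=\int_0^t(s/t)^\varepsilon s^{-\varepsilon}b(s)\,ds/s$ with $(s/t)^\varepsilon\leq 1$, and combine with (iii)-style reasoning to dominate it by $\int_0^\infty(s/t)^\varepsilon\min(1,\ldots)$ giving a non-increasing majorant. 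I would organize the write-up so that (ii) and (iii) are proved first and then invoked to dispatch (iv) and (v) quickly, keeping the monotonicity bookkeeping to a minimum.
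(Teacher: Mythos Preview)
The paper does not actually give a proof of this proposition; it simply refers the reader to \cite[Lemma~2.1]{GOT} and \cite[Proposition~3.4.33]{EE}, which is precisely the source you propose to follow. Your sketch is correct in substance; the slightly muddled passage in (iv) clears up once you change variables $u=s/t$ in
\[
t^{-\varepsilon}B(t)=\int_0^t\Big(\frac{s}{t}\Big)^{\varepsilon} s^{-\varepsilon}b(s)\,\frac{ds}{s}\approx\int_0^1 u^{\varepsilon}\,g_{-\varepsilon}(ut)\,\frac{du}{u},
\]
which is visibly non-increasing in $t$, and (v) is handled symmetrically.
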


\section{Main result}

  Let $\Phi_0$ and $\Phi_1$ be two general parameters of $K$-interpolation method. In order to formulate our results we need the following conditions on a positive weight $\rho$ defined on $(0,\infty).$

$$
(C_1)\;\;\;\;\;\;\;\;\;\;\;\;\;\;\;\;\;\;\;\;\;\; \frac{t \|\chi_{(t,\infty)}\|_{\Phi_0}}{\|\min(u,t)\|_{\Phi_1}}\lesssim \rho(t) \lesssim    \frac{ \|\min(u,t)\|_{\Phi_0}}{\|u\chi_{(0,t)}(u)\|_{\Phi_1}},\; \;\; t>0.
$$

$$
(C_2)\;\;\;\;\;\;\;\;\;\;\;\;\;\;\;\;\;\;\;\;\;\;\;\;\;\;\;\;\left\|\chi_{(0,t)}(u)u\|\min(s,u)\|^{-1}_{\Phi_1}\right\|_{\Phi_0}\lesssim \rho(t),\;\; t>0.
$$

$$
(C_3)\;\;\;\;\;\;\;\;\;\;\;\;\;\;\;\;\;\;\;\;\;\;\;\;\;\;\left\|\chi_{(t,\infty)}(u)u\|\min(s,u)\|^{-1}_{\Phi_0}\right\|_{\Phi_1}\lesssim \frac{1}{\rho(t)},\;\; t>0.
$$

$$
(C_4)\;\;\;\;\;\;\;\;\;\;\;t\|\chi_{(t,\infty)}\|_{\Phi_0}\lesssim \|u\chi_{(0,t)}(u)\|_{\Phi_0} +t\rho(t)\|\chi_{(t,\infty)}\|_{\Phi_1},\;\; t>0.
$$

\begin{lemma} \label{L1}\cite[Theorem 4 (Case 1)]{AKA}
Let $(A_0, A_1)$ be a compatible couple of quasi-normed spaces, and let $\Phi_0$ and $\Phi_1$ be two general parameters of $K$-interpolation method. Assume that a weight $\rho$ satisfies the conditions $(C_1)$, $(C_2)$ and $(C_3)$. Then, for  all  $f\in A_0+A_1$ and for all $t>0$, the following two-sided estimate
\begin{align}\label{mtpe1}
K(\rho(t),f;{\bar{A}}_{\Phi_0}, {\bar{A}}_{\Phi_1}) &\approx  \|\chi_{(0,t)}(u)K(u,f)\|_{\Phi_0}+ \rho(t) \|\chi_{(t,\infty)}(u)K(u,f)\|_{\Phi_1}\notag\\
 &\;\;\;\;+\|\chi_{(t,\infty)}\|_{\Phi_0}K(t,f)+\rho(t)\|u\chi_{(0,t)}(u)\|_{\Phi_1}\frac{K(t,f)}{t},
\end{align}
holds.

\end{lemma}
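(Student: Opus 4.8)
The plan is to establish the two-sided estimate \eqref{mtpe1} by proving the upper and lower bounds separately, exploiting the defining formula for the $K$-functional of the couple $(\bar A_{\Phi_0},\bar A_{\Phi_1})$ together with a near-optimal decomposition of $f$ in $A_0+A_1$ governed by the parameter $t$.

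For the upper bound, I would fix $t>0$ and pick a decomposition $f=f_0+f_1$ with $f_j\in A_j$ such that $\|f_0\|_{A_0}+t\|f_1\|_{A_1}\lesssim K(t,f)$. The idea is to estimate $\|f_0\|_{\bar A_{\Phi_0}}=\|K(u,f_0)\|_{\Phi_0}$ and $\|f_1\|_{\bar A_{\Phi_1}}=\|K(u,f_1)\|_{\Phi_1}$ using the elementary bounds $K(u,f_0)\le\|f_0\|_{A_0}$ and $K(u,f_1)\le u\|f_1\|_{A_1}$, combined with $K(u,f_0)\le K(u,f)+K(u,f_1)\le K(u,f)+u\|f_1\|_{A_1}$ and similarly $K(u,f_1)\le u^{-1}(K(u,f)+\|f_0\|_{A_0})$ when it is advantageous. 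Splitting the $\Phi_0$-norm over $(0,t)$ and $(t,\infty)$ and the $\Phi_1$-norm likewise, the four terms on the right-hand side of \eqref{mtpe1} should emerge: on $(0,t)$ one keeps $K(u,f_0)\lesssim K(u,f)$ plus a term controlled via $\|f_1\|_{A_1}\lesssim K(t,f)/t$ and the weight $\|u\chi_{(0,t)}\|_{\Phi_1}$; on $(t,\infty)$ one uses $\|f_0\|_{A_0}\lesssim K(t,f)$ against $\|\chi_{(t,\infty)}\|_{\Phi_0}$, and $K(u,f_1)\lesssim K(u,f)$ for the $\Phi_1$-piece. The conditions $(C_2)$ and $(C_3)$ are what allow the "cross terms" (the ones where $f_1$ enters the $\Phi_0$-norm or $f_0$ enters the $\Phi_1$-norm) to be absorbed into $\rho(t)$ times the appropriate quantity; monotonicity of the quasi-norms is used throughout to justify the pointwise majorizations, and one must track the quasi-norm constant, which stays uniformly bounded since only finitely many triangle-type inequalities are invoked.

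For the lower bound, take any decomposition $f=g_0+g_1$ with $g_0\in\bar A_{\Phi_0}$, $g_1\in\bar A_{\Phi_1}$ realizing $K(\rho(t),f;\bar A_{\Phi_0},\bar A_{\Phi_1})$ up to a constant, so that $\|K(u,g_0)\|_{\Phi_0}+\rho(t)\|K(u,g_1)\|_{\Phi_1}\lesssim K(\rho(t),f;\dots)$. From $K(u,f)\le K(u,g_0)+K(u,g_1)$ one gets, by monotonicity, $\|\chi_{(0,t)}(u)K(u,f)\|_{\Phi_0}\lesssim\|K(u,g_0)\|_{\Phi_0}+\|\chi_{(0,t)}(u)K(u,g_1)\|_{\Phi_0}$, and here $K(u,g_1)\le u\,\|K(s,g_1)\|_{\Phi_1}/\|\min(s,u)\|_{\Phi_1}$ (a standard lower-bound device: $K(s,g_1)\ge\min(s,u)\cdot K(u,g_1)/u$ for the relevant range, so the $\Phi_1$-norm of $K(\cdot,g_1)$ dominates $\|\min(s,u)\|_{\Phi_1}K(u,g_1)/u$), which via $(C_2)$ gives the bound $\lesssim\rho(t)\|K(u,g_1)\|_{\Phi_1}$; symmetrically $\rho(t)\|\chi_{(t,\infty)}(u)K(u,f)\|_{\Phi_1}$ is handled using $(C_3)$. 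For the two "boundary" terms, observe $K(t,f)\le K(t,g_0)+K(t,g_1)$ and estimate $K(t,g_0)\|\chi_{(t,\infty)}\|_{\Phi_0}$ by noting $K(u,g_0)\gtrsim K(t,g_0)$ for $u>t$ (subadditivity gives $K(t,g_0)\le K(u,g_0)$ since $u\mapsto K(u,\cdot)$ is nondecreasing), hence $K(t,g_0)\|\chi_{(t,\infty)}\|_{\Phi_0}\lesssim\|K(u,g_0)\|_{\Phi_0}$; and $K(t,g_1)\le t\cdot K(u,g_1)/u$ for $u<t$ yields $\rho(t)\|u\chi_{(0,t)}(u)\|_{\Phi_1}K(t,g_1)/t\lesssim\rho(t)\|K(u,g_1)\|_{\Phi_1}$. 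Summing the four contributions reproduces the right-hand side of \eqref{mtpe1} as a lower bound for $K(\rho(t),f;\bar A_{\Phi_0},\bar A_{\Phi_1})$.

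The main obstacle I anticipate is the careful bookkeeping in the two cross-term estimates — establishing the sharp pointwise inequality that converts $K(u,g_j)$ into $u/\|\min(s,u)\|_{\Phi_{j}}$ times the full $\Phi_j$-norm of $K(\cdot,g_j)$, and then verifying that applying $\|\cdot\|_{\Phi_0}$ (resp. $\|\cdot\|_{\Phi_1}$) to the resulting expression in $u$ yields exactly the left-hand sides of $(C_2)$ and $(C_3)$. This is where the structure of the conditions is used essentially, and where one must be scrupulous about which variable ($u$ or $s$) the outer quasi-norm acts on. The role of condition $(C_1)$ is comparatively mild here — it mainly ensures consistency of the estimates and the nondegeneracy of $\rho$ — but since Lemma \ref{L1} is quoted verbatim from \cite[Theorem 4 (Case 1)]{AKA}, it suffices to indicate this scheme and refer to that source for the remaining routine verifications; I would present the argument in the concise form above and cite \cite{AKA} for the full details.
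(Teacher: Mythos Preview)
The paper does not prove Lemma~\ref{L1} at all; it merely quotes it as \cite[Theorem~4 (Case~1)]{AKA}, and you correctly recognize this at the end of your proposal. Your sketch of the upper and lower bounds is a faithful outline of the standard argument behind that result (one minor inaccuracy: in the upper bound the cross terms are absorbed by the monotonicity of $K(u,f)$ and $K(u,f)/u$, not by $(C_2)$--$(C_3)$; those conditions, together with $(C_1)$, are needed only in the lower bound), so your proposal actually goes beyond what the paper does.
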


The main result of this paper is following.

\begin{theorem}\label{T1GHF}
Let $(A_0, A_1)$ be a compatible couple of quasi-normed spaces, and let $\Phi_0$ and $\Phi_1$ be two general parameters of $K$-interpolation method.   Put
  $$
\rho(t)=\frac{\|\min(u,t)\|_{\Phi_0}}{\|\min(u,t)\|_{\Phi_1}},\;\; t>0.
$$
\begin{enumerate}[(i)]
    \item Assume that $\rho$ satisfies the conditions  $(C_2)$ and $(C_3)$.
Then, for  all  $f\in A_0+A_1$ and for all $t>0$, the following two-sided estimate

\begin{align}\label{T1GHFe1}
K(\rho(t),f;{\bar{A}}_{\Phi_0}, {\bar{A}}_{\Phi_1}) &\approx  \|\chi_{(0,t)}(u)K(u,f)\|_{\Phi_0}
+\rho(t)\|\chi_{(t,\infty)}(u)K(u,f)\|_{\Phi_1}\notag\\
 &\;\;\;\;+\|\chi_{(t,\infty)}\|_{\Phi_0}K(t,f),
\end{align}
holds.
\item Assume that $\rho$ satisfies the conditions  $(C_2)$, $(C_3)$ and $(C_4)$. Then, for  all  $f\in A_0+A_1$ and for all $t>0$, the following two-sided estimate
\begin{equation}\label{T1GHFe2}
 K(\rho(t),f;{\bar{A}}_{\Phi_0}, {\bar{A}}_{\Phi_1}) \approx  \|\chi_{(0,t)}(u)K(u,f)\|_{\Phi_0}
+\rho(t)\|\chi_{(t,\infty)}(u)K(u,f)\|_{\Phi_1},
\end{equation}
holds.
\end{enumerate}

\end{theorem}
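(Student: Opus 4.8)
The plan is to deduce Theorem \ref{T1GHF} from Lemma \ref{L1} by showing that, for the specific choice
$$
\rho(t)=\frac{\|\min(u,t)\|_{\Phi_0}}{\|\min(u,t)\|_{\Phi_1}},
$$
condition $(C_1)$ is automatically satisfied, and that the last two ``endpoint'' terms on the right-hand side of \eqref{mtpe1} can be absorbed. First I would verify $(C_1)$: the left inequality $t\|\chi_{(t,\infty)}\|_{\Phi_0}\lesssim \rho(t)\|\min(u,t)\|_{\Phi_1}$ is, after substituting the definition of $\rho$, exactly $t\|\chi_{(t,\infty)}\|_{\Phi_0}\lesssim \|\min(u,t)\|_{\Phi_0}$, which holds because $t\chi_{(t,\infty)}(u)\le \min(u,t)$ pointwise and $\Phi_0$ has monotone quasi-norm; the right inequality is the dual statement $\|u\chi_{(0,t)}(u)\|_{\Phi_1}\lesssim \|\min(u,t)\|_{\Phi_1}$, valid since $u\chi_{(0,t)}(u)\le \min(u,t)$. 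Hence the hypotheses $(C_2)$ and $(C_3)$ already entitle us to invoke Lemma \ref{L1}, and \eqref{mtpe1} holds with this $\rho$.

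Next I would prove part (i) by eliminating the term $\rho(t)\|u\chi_{(0,t)}(u)\|_{\Phi_1}\,K(t,f)/t$ from \eqref{mtpe1}. The idea is that this term is dominated by the sum of the other three. Indeed, by the right-hand inequality of $(C_1)$ just established, $\rho(t)\|u\chi_{(0,t)}(u)\|_{\Phi_1}\lesssim \|\min(u,t)\|_{\Phi_0}$, so the term is at most $\|\min(u,t)\|_{\Phi_0}K(t,f)/t$. Splitting $\|\min(u,t)\|_{\Phi_0}\approx \|u\chi_{(0,t)}(u)\|_{\Phi_0}+t\|\chi_{(t,\infty)}\|_{\Phi_0}$ (using the monotone quasi-norm and the quasi-triangle inequality), the piece $t\|\chi_{(t,\infty)}\|_{\Phi_0}K(t,f)/t=\|\chi_{(t,\infty)}\|_{\Phi_0}K(t,f)$ is literally the third term on the right of \eqref{T1GHFe1}; for the remaining piece $\|u\chi_{(0,t)}(u)\|_{\Phi_0}K(t,f)/t$ I would use the standard fact that $s\mapsto K(s,f)/s$ is non-increasing, so $K(t,f)/t\le K(u,f)/u$ for $u<t$, giving $\|u\chi_{(0,t)}(u)\|_{\Phi_0}K(t,f)/t\le \|\chi_{(0,t)}(u)K(u,f)\|_{\Phi_0}$, the first term of \eqref{T1GHFe1}. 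Since the right-hand side of \eqref{T1GHFe1} is trivially $\le$ that of \eqref{mtpe1} (it consists of three of its four summands), this proves the equivalence.

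For part (ii) I would further discard the term $\|\chi_{(t,\infty)}\|_{\Phi_0}K(t,f)$ using condition $(C_4)$. Dividing $(C_4)$ by $t$ gives $\|\chi_{(t,\infty)}\|_{\Phi_0}\lesssim \|u\chi_{(0,t)}(u)\|_{\Phi_0}/t+\rho(t)\|\chi_{(t,\infty)}\|_{\Phi_1}$, so multiplying by $K(t,f)$ and arguing exactly as above — monotonicity of $K(u,f)/u$ for the first summand, and monotonicity of $K(u,f)$ together with $K(t,f)\le K(u,f)$ for $u>t$ for the second — we bound $\|\chi_{(t,\infty)}\|_{\Phi_0}K(t,f)$ by $\|\chi_{(0,t)}(u)K(u,f)\|_{\Phi_0}+\rho(t)\|\chi_{(t,\infty)}(u)K(u,f)\|_{\Phi_1}$, which are the two surviving terms in \eqref{T1GHFe2}. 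Combined with part (i) this yields \eqref{T1GHFe2}.

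\textbf{Main obstacle.} The only genuinely delicate point is making the absorption arguments respect the quasi-norm structure: $\Phi_0$ and $\Phi_1$ are merely quasi-normed with a monotone quasi-norm, so the ``splitting'' $\|\min(u,t)\|_{\Phi_0}\approx \|u\chi_{(0,t)}(u)\|_{\Phi_0}+t\|\chi_{(t,\infty)}\|_{\Phi_0}$ needs the quasi-triangle inequality with a uniform constant, and the passage from pointwise bounds (e.g. $K(t,f)/t\cdot u\chi_{(0,t)}(u)\le K(u,f)\chi_{(0,t)}(u)$) to quasi-norm bounds relies only on monotonicity, which is fine, but one must be careful that all implied constants are independent of $t$ and $f$. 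I expect no real difficulty here, since all the inequalities used ($t\chi_{(t,\infty)}(u)\le\min(u,t)$, $u\chi_{(0,t)}(u)\le\min(u,t)$, monotonicity of $K(u,f)$ and of $K(u,f)/u$) are elementary and the constants are structural.
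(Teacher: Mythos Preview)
Your proposal is correct and follows essentially the same route as the paper: verify that the specific choice of $\rho$ automatically satisfies $(C_1)$ via the pointwise bounds $t\chi_{(t,\infty)}\le\min(u,t)$ and $u\chi_{(0,t)}\le\min(u,t)$, invoke Lemma~\ref{L1}, then absorb the fourth term of \eqref{mtpe1} using the splitting $\|\min(u,t)\|_{\Phi_0}\approx\|u\chi_{(0,t)}(u)\|_{\Phi_0}+t\|\chi_{(t,\infty)}\|_{\Phi_0}$ together with the decrease of $K(u,f)/u$, and finally absorb the third term via $(C_4)$ and the monotonicity of $K(u,f)$ and $K(u,f)/u$. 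The paper's proof is organized identically, and your remark that all constants are structural (independent of $t$ and $f$) is exactly what is needed and is implicit in the paper's argument.
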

 \begin{proof}
 First we derive the formula (\ref{T1GHFe1}) using Lemma \ref{L1}. Since (for $j=0,1$)
 $$\|\min(u,t)\|_{\Phi_j}\approx \|u\chi_{(0,t)}(u)\|_{\Phi_j} + t\|\chi_{(t,\infty)}\|_{\Phi_j},\;\; t>0,$$
  the condition  $(C_1)$  is satisfied trivially by our choice of $\rho.$ Next note that
\begin{equation}\label{mtpe21}
\rho(t)\|u\chi_{(0,t)}(u)\|_{\Phi_1}\frac{K(t,f)}{t}\lesssim \|\chi_{(t,\infty)}\|_{\Phi_0} K(t,f)+ \|u\chi_{(0,t)}(u)\|_{\Phi_0}\frac{K(t,f)}{t}.
\end{equation}
Since $t\mapsto K(t,f)/t$ is non-increasing (see \cite[Proposition 1.2]{BS}), we have
\begin{equation*}
 \|\chi_{(0,t)}(u)K(u,f)\|_{\Phi_0}\geq \frac{K(t,f)}{t}\|u\chi_{(0,t)}(u)\|_{\Phi_0},
\end{equation*}
therefore, (\ref{mtpe21}) reduces to
\begin{equation}\label{mtpe31}
\rho(t)\|u\chi_{(0,t)}(u)\|_{\Phi_1}\frac{K(t,f)}{t}\lesssim  \|\chi_{(t,\infty)}\|_{\Phi_0}K(t,f) +  \|\chi_{(0,t)}(u)K(u,f)\|_{\Phi_0}.
\end{equation}
Finally, in view of (\ref{mtpe31}), the estimate   (\ref{T1GHFe1})  follows from (\ref{mtpe1}).\\

Next we derive (\ref{T1GHFe2}).  Since $t\mapsto K(t,f)$  is non-decreasing (see \cite[Proposition 1.2]{BS}) and  $t\mapsto K(t,f)/t$ is non-increasing, we have
$$\|\chi_{(0,t)}(u)K(u,f)\|_{\Phi_0}
+\rho(t)\|\chi_{(t,\infty)}(u)K(u,f)\|_{\Phi_1}\geq \frac{K(t,f)}{t}\|u\chi_{(0,t)}(u)\|_{\Phi_0} +\rho(t) K(t,f)\|\chi_{(t,\infty)}\|_{\Phi_1},$$
combining above inequality with the condition $(C_4)$, we arrive at
$$
\|\chi_{(t,\infty)}\|_{\Phi_0}K(t,f)\lesssim \|\chi_{(0,t)}(u)K(u,f)\|_{\Phi_0} +\rho(t)\|\chi_{(t,\infty)}(u)K(u,f)\|_{\Phi_1},\;\; t>0,
$$
now inserting the above estimate in (\ref{T1GHFe1}) yields (\ref{T1GHFe2}). The proof is complete.
\end{proof}
 \begin{remark}\label{R2}{\em
The  essential difference between  the general computation in our main result and that of \cite[Theorem 4]{AKA} is that the   weight $\rho$ is explicitly given  in our main result.
}
\end{remark}

\section{Scope of the main result}
 
 As mentioned in the Introduction,  almost all  those  Holmstedt-type estimates, contained in the papers  \cite{ AE, AFG, AKA, D1, D2, D3, DFS2, EOP, FS1, FS3, FS4, FS5, FS6, GOT, He, Pe}, which express the couple $(\bar{A}_{\Phi_0}, \bar{A}_{\Phi_1})$,  for special cases of $\Phi_0$ and $\Phi_1$,  in terms of $(A_0, A_1)$,   can be derived immediately  by our general computation. In this section, we illustrate this in some instances. \\

 \begin{enumerate}[(i)]

 \item Let $0< q_0, q_1\leq \infty$, $0\leq \theta_0, \theta_1\leq 1$, and let $b_j$ ($j=0,1$) be a slowly varying function. Define
 $$\|g\|_{\Phi_{ \theta_j,q_j; b_j}}=\left(\int_0^\infty \left[t^{-\theta_j}b_j(t)|g(t)|\right]^{q_j}\frac{dt}{t}\right)^{1/{q_j}},$$
with the usual modification when $q_j=\infty\,\, (j=0,1).$  First consider the case $0<\theta_0< \theta_1<1$. Thanks to Proposition \ref{svp1}, it is easy to compute that
$$\|\min(u,t)\|_{\Phi_{\theta_j,q_j;b_j}}\approx t^{1-\theta_j}b_j(t),\;\; t>0.
$$ The conditions $(C_2)$, $(C_3)$ and $(C_4)$ are also met again thanks to Proposition \ref{svp1}. As a consequence of Theorem \ref{T1GHF} (ii), we recover an extension of  Holmstedt's formula contained in \cite[Theorem 3.1 (a)]{GOT}. When $b_j\equiv 1$, we get back the classical Holmstedt's formula.\\

Next we consider the case when $\theta_0=0$ and $\theta_1=1$. Again thanks to Proposition \ref{svp1}, the conditions $(C_2)$ and $(C_3)$ are met, and also we have the following computations
 $$\|\min(u,t)\|_{\Phi_{0, q_0;b_0}}\approx t\left(\int_t^\infty b_0^{q_0}(s)\frac{ds}{s}\right)^{1/q_0},\;\; t>0,$$
 and
  $$\|\min(u,t)\|_{\Phi_{1, q_1;b_1}}\approx \left(\int_0^t b_1^{q_1}(s)\frac{ds}{s}\right)^{1/q_1},\;\; t>0.$$
  Let us  put $\bar{A}_{j,q_j:b_j}=\bar{A}_{\Phi_{j, q_j;b_j}}$ ($j=0,1$). As a consequence of Theorem \ref{T1GHF} (i), for all $f\in A_0+A_1$ and $t>0$, we get the following formula

\begin{eqnarray*}
K\left(\rho(t),f;\bar{A}_{0,q_0:b_0}, \bar{A}_{1,q_1:b_1}\right)&\approx&    \left(\int_0^t b_0^{q_0}(s)K^{q_0}(s,f)\frac{ds}{s}\right)^{1/q_0} + \rho(t)\left(\int_t^\infty b_1^{q_1}(s)K^{q_1}(s,f)\frac{ds}{s}\right)^{1/q_1}\\
&&+\left(\int_t^\infty b_0^{q_0}(s)\frac{ds}{s}\right)^{1/q_0}K(t,f),
\end{eqnarray*}
where
$$\rho(t)=\frac{t\left(\int_t^\infty b_0^{q_0}(s)\frac{ds}{s}\right)^{1/q_0}}{\left(\int_0^t b_1^{q_1}(s)\frac{ds}{s}\right)^{1/q_1}}, \;\; t>0.$$
 This formula also follows from  \cite[Example 5]{AKA}, however  computations in \cite[Example 5]{AKA} are tedious as compared to our straightforward computations.\\

 Next we consider the case  when $\theta_0=\theta_1=0$ with $q_0\neq q_1.$ This time (thanks to Proposition \ref{svp1}) we have (for $j=0,1$)
  $$\|\min(u,t)\|_{\Phi_{j, q_j;b_j}}\approx t\left(\int_t^\infty b_j^{q_j}(s)\frac{ds}{s}\right)^{1/q_j},\;\; t>0. $$
The condition $(C_4)$ is met while the conditions $(C_2)$ and $(C_3)$ turn into
$$
\left(\int_{0}^{t}\frac{b_0^{q_0}(x)}{\left(\int_{x}^{\infty}b_1^{q_1}(s)\frac{ds}{s}\right)^{q_0/q_1}}\frac{dx}{x}\right)^{1/q_0}\lesssim \frac{\left(\int_{t}^{\infty}b_0^{q_0}(s)\frac{ds}{s}\right)^{1/q_0}}{\left(\int_{t}^{\infty}b_1^{q_1}(s)\frac{ds}{s}\right)^{1/q_1}},\;\; t>0,
$$
and
$$
\left(\int_{t}^{\infty}\frac{b_1^{q_1}(x)}{\left(\int_{x}^{\infty}b_0^{q_0}(s)\frac{ds}{s}\right)^{q_1/q_0}}\frac{dx}{x}\right)^{1/q_1}\lesssim \frac{\left(\int_{t}^{\infty}b_1^{q_1}(s)\frac{ds}{s}\right)^{1/q_1}}{\left(\int_{t}^{\infty}b_0^{q_0}(s)\frac{ds}{s}\right)^{1/q_0}},\;\; t>0.
$$
As a consequence of Theorem \ref{T1GHF} (ii), we recover the Holmstedt-type estimate contained in \cite[Theorem 3.2]{AFG}. It is worthy of mention that it has been established in \cite[Theorem 3.2]{AFG} that a sufficient condition for the previous two estimates to be held is that the following function
$$t\mapsto \frac{\left(\int_{t}^{\infty}b_0^{q_0}(s)\frac{ds}{s}\right)^{1/q_0(1+\epsilon)}}{\left(\int_{t}^{\infty}b_1^{q_1}(s)\frac{ds}{s}\right)^{1/q_1}} $$
is equivalent to a nondecreasing function for some $\epsilon>0.$

 \item Next we turn to the    papers \cite{FS4, FS5, FS6}.  Let  $0 \leq \theta \leq 1$, and  let $a$, $b$ be slowly varying functions.    Let $E$, $F$ be  rearrangement invariant spaces (see \cite{BS}), and let $\tilde{E}$ and  $\tilde{F}$ be the corresponding rearrangement invariant spaces with respect to homogeneous measure $dt/t$ (see \cite[p. 135]{FS1}).  Define
    $$\|g\|_{\Phi_{\theta,b,E}}=\left\|t^{-{\theta}}b(t)g(t)\right\|_{\tilde{E}},$$

     $$\|g\|_{\Phi_{\mathcal{L};\theta,b, E,a,F}}=\left\|b(t)\left\|s^{-{\theta}}a(s)g(s)\right\|_{\tilde{F}(0,t)}\right\|_{\tilde{E}}.$$
     and
     $$\|g\|_{\Phi_{\mathcal{R};\theta,b, E,a,F}}=\left\|b(t)\left\|s^{-{\theta}}a(s)g(s)\right\|_{\tilde{F}(t,\infty)}\right\|_{\tilde{E}},$$

     Using the estimates in \cite[Lemma 2.3]{FS1}, we can easily make the following straightforward computations. For $0< \theta < 1,$ we have
     $$\|\min(u,t)\|_{\Phi_{\theta,q,E}}\approx t^{1-\theta}b(t),\;\; t>0,
    $$
    $$\|\min(u,t)\|_{\Phi_{\mathcal{L};\theta,b, E,a,F}}\approx t^{1-\theta}a(t)\|b\|_{\tilde{E}(t,\infty)},\;\; t>0,
    $$
    and
    $$\|\min(u,t)\|_{\Phi_{\mathcal{R};\theta,b, E,a,F}}\approx t^{1-\theta}a(t)\|b\|_{\tilde{E}(0,t)},\;\; t>0.
    $$
    And also we have

    $$\|\min(u,t)\|_{\Phi_{0,q,E}}\approx t \|b\|_{\tilde{E}(t,\infty)},\;\; t>0,
    $$
    and
    $$\|\min(u,t)\|_{\Phi_{1,q,E}}\approx \|b\|_{\tilde{E}(0,t)},\;\; t>0.
    $$

    Now  using the above computations and estimates in  \cite[Lemma 2.3]{FS1}, we can  immediately derive, from Theorem \ref{T1GHF},  all the Holmstedt-type estimates contained in \cite[Theorem 3.1]{FS5}, \cite[Theorem 3.2]{FS5}, \cite[Theorem 3.4]{FS5}, \cite[Theorem 3.1]{FS4}, \cite[Theorem 3.2]{FS4},  \cite[Theorem 3.1]{FS6}, \cite[Theorem 3.2]{FS6}, \cite[Theorem 3.3]{FS6} and  \cite[Theorem 3.4]{FS6}.  It is worthy of mention that authors of above mentioned papers have to carry out   a lot of computations in  each separate case. While, on the other hand, our main result provides a general framework where all these separate cases can be treated simultaneously by means of handful computations.

 \end{enumerate}

{\bf Acknowledgements.} The research of A. Gogatishvili was partially supported by the Czech Academy of Sciences (RVO 67985840), by Czech Science Foundation (GAČR), grant no: 23-04720S, and by Shota Rustaveli National Science Foundation of Georgia (SRNSFG), grant no: FR21-12353.


\begin{thebibliography}{99}

\bibitem{AEEK}  I. Ahmed,   D. E. Edmunds,  W. D.  Evans and G. E. Karadzhov,    Reiteration theorems for the $K$-interpolation method in limiting cases,  {\it Math. Nachr.} \textbf{284} (2011),  421-442.

\bibitem{AE}  I. Ahmed and T. Ejaz,    Reiteration theorems for two-parameter limiting real interpolation methods, {\it  J.  Funct. Spaces} vol. 2016, Article ID 8214643, 9 pages, 2016. https://doi.org/10.1155/2016/8214643

\bibitem{AFG}  I. Ahmed,   A.  Fiorenza and  A. Gogatishvili,     Holmstedt's formula for the $K$-functional: the limit case $\theta_0=\theta_1$,  {\it Math. Nachr.} \textbf{296} (2023),  5474-5492.  https://doi.org/10.1002/mana.202200440

\bibitem{AKA}  I. Ahmed, G. E. Karadzhov  and A. Raza,   General Holmstedt's formulae for the $K$-functional, {\it  J.  Funct. Spaces} vol. 2017, Article ID 4958073, 9 pages, 2017. https://doi.org/10.1155/2017/4958073

\bibitem{BS}  C. Bennett and   R. Sharpley,   {\it Interpolation of Operators}, Academic Press, Boston,  1988.

\bibitem{BL}   J.  Bergh and   J. L\"{o}fstr\"{o}m,  { \it Interpolation Spaces.
An Introduction},  Springer, Berlin,  1976.

\bibitem{BK} Y. A. Brudny\v{i} and N. Y. Krugljak, { \it Interpolation Functors and Interpolation Spaces,  Volume I}, North-Holland, Amsterdam, 1991.


\bibitem{D1} R. Ya. Doktorski,    Reiteration relations of the real interpolation method,  { \it Soviet Math. Dokl.}   \textbf{44}  (1992),  665-669.


\bibitem{D2} R. Ya. Doktorski,   Reiteration formulae for the real interpolation  method including $\mathcal{L}$ or $\mathcal{R}$ limiting spaces, {\it  J.  Funct. Spaces} vol. 2020, Article ID 6685993, 15 pages, 2020. https://doi.org/10.1155/2020/6685993

\bibitem{D3} R. Ya. Doktorski, Some reiteration theorems for $\mathcal{R}$ ,  $\mathcal{L}$, $\mathcal{R}\mathcal{R}$,  $\mathcal{R} \mathcal{L}$, $\mathcal{L}\mathcal{R}$, and  $\mathcal{L}\mathcal{L}$ limiting interpolation spaces, {\it J. Funct. Spaces} Vol. 2021, Article ID 8513304, 31 pages, 2021. https://doi.org/10.1155/2021/8513304

\bibitem{DFS1} R. Ya. Doktorski, P. Fern\'{a}ndez-Mart\'{i}nez and  T. Signes, Reiteration theorem for $\mathcal{R}$ and $\mathcal{L}$-spaces with the same parameter,    { \it J. Math. Anal. Appl.}  \textbf{508} (2022): 125846.

\bibitem{DFS2} R. Ya. Doktorski, P. Fern\'{a}ndez-Mart\'{i}nez and  T. Signes, The K-functional and reiteration theorems for left and right spaces, Part I,    { \it J. Math. Anal. Appl.}  \textbf{531} (2024): 127882.

\bibitem {EE} D. E. Edmunds and W. D. Evans, {\it Hardy operators, Function Spaces and Embeddings,} Springer, Berlin, 2004.


\bibitem{EOP}  W. D. Evans, B. Opic and L. Pick, Real interpolation with
logarithmic functors, { \it J. Inequal. Appl.} \textbf{7} (2002),  187-269.







\bibitem{FS1}  P. Fern\'{a}ndez-Mart\'{i}nez and  T. Signes,   Real interpolation with symmetric spaces and slowly varying functions,  { \it Q.  J. Math.}  \textbf{63}  (2012),  133-164.



\bibitem{FS3}  P. Fern\'{a}ndez-Mart\'{i}nez and  T. Signes,  Limit cases of reiteration theorems,  {\it Math. Nachr.} \textbf{288}  (2015), 25-47.





\bibitem{FS5}  P. Fern\'{a}ndez-Mart\'{i}nez and  T. Signes, General reiteration theorems for $\mathcal{R}$ and $\mathcal{L}$ classes: Case of left $\mathcal{R}$-spaces and right  $\mathcal{L}$-spaces,    { \it J. Math. Anal. Appl.}  \textbf{494} (2021) 124649.

\bibitem{FS4}  P. Fern\'{a}ndez-Mart\'{i}nez and  T. Signes, General reiteration theorems for $\mathcal{L}$ and $\mathcal{R}$ classes: Case of right $\mathcal{R}$-spaces and left  $\mathcal{L}$-spaces, {\it Mediterr. J. Math.} \textbf{19}, 193 (2022).

 \bibitem{FS6} P. Fern\'{a}ndez-Mart\'{i}nez and  T. Signes, General reiteration theorems for $\mathcal{R}$ and $\mathcal{L}$  classes: mixed interpolation of $\mathcal{R}$ and $\mathcal{L}$-spaces, {\it Positivity} \textbf{26}, 47 (2022). https://doi.org/10.1007/s11117-022-00888-z




\bibitem{GOT}   A. Gogatishvili,  B.   Opic and W.  Trebels,     Limiting reiteration for real interpolation with slowly varying functions,  { \it Math. Nachr.} \textbf{278}   (2005),  86-107.






\bibitem{He} H. P. Heinig,   Interpolation of quasi-normed spaces involving weights,  {\it CMS Conference Proceedings},
{\bf 1} (1981), 245-267.

\bibitem{Ho} T. Holmstedt,  Interpolation of quasi-normed spaces,  {\it Math. Scand.}
{\bf 26} (1970), 177-199.

\bibitem{Pe} L. E. Persson,  Interpolation with a function parameter,  {\it Math. Scand.}
{\bf 59} (1986), 199-222.


\bibitem{T}  H. Triebel,   { \it Interpolation Theory, Function Spaces, Differential Operators}, North-Holland, Amsterdam, 1978.









\end{thebibliography}
\end{document}